\newtheorem{lemma}{Lemma}
\newtheorem{theorem}{Theorem}
\newtheorem{remark}{Remark}
\newtheorem{example}{Example}
\numberwithin{equation}{section}
 \numberwithin{Lem}{section}
 \numberwithin{Defi}{section}
 \numberwithin{Theo}{section}
 \numberwithin{Rem}{section}
  \numberwithin{Coro}{section}
  \numberwithin{Fig}{section}
\def\NN{\hbox{\rlap{I}\kern.16em N}}
\def\NC{\hbox{\rlap{\kern.24em\raise.1ex\hbox
                  {\vrule height1.3ex width.9pt}}C}}
\title{Fractional Crank-Nicolson-Galerkin finite element methods for nonlinear time fractional parabolic problems with time delay \thanks{This work is supported by Natural Science Foundation of Hunan Province (Grant No. 2018JJ3628) and National Natural Science Foundation of China (Grant Nos.12071488 and 11971488)}}
\author{ Lili Li \thanks{School of Mathematics and Statistics,
Huazhong University of Science and Technology, Wuhan 430074, China}
\and Mianfu She\thanks{School of Mathematics and Statistics,
Huazhong University of Science and Technology, Wuhan 430074, China
}
\and Yuanling Niu\thanks{School of Mathematics and Statistics, Central South University, Changsha 410083, China
({\tt To whom correspondence should be addressed. E-mail: yuanlingniu@csu.edu.cn});}}
\date{}
\begin{document}

\maketitle

\begin{abstract}
A linearized numerical scheme is proposed to solve the nonlinear time fractional parabolic problems with time delay. The scheme is based on the standard Galerkin finite element method in the spatial direction,  the fractional Crank-Nicolson method and extrapolation methods in the temporal direction. A novel discrete fractional Gr\"{o}nwall inequality is established. Thanks to the inequality, the error estimate of fully discrete scheme is obtained. Several
 numerical examples are provided to verify the effectiveness of the fully discrete numerical method.

\vskip 5pt \noindent {\bf Keywords:} {Nonlinear time fractional parabolic problems with time delay, Fractional Gr\"{o}nwall type inequality, Fractional Crank-Nicolson-Galerkin finite element method,  Linearized numerical scheme}


\end{abstract}

\section{Introduction}\label{int}
In this paper, we consider the linearized fractional Crank-Nicolson-Galerkin finite element method
for solving the nonlinear time fractional parabolic problems with time delay
\begin{align}\label{eq.1.1}
\left\{
\begin{array}{ll}
^R D_t^{\alpha}u- \triangle u =f(t,u(x,t),u(x,t-\tau)),
& \textrm{in }\Omega\times(0,T], \\
u(x,t)=\varphi(x,t) ,
& \textrm{in }\Omega\times(-\tau,0],\\
u(x,t)=0,
& \textrm{on }\partial\Omega\times(0,T],
\end{array}\right.
\end{align}
where $\Omega$ is a bounded convex and convex polygon in $R^{2}$ (or polyhedron in $R^{3}$), $\tau$ is the delay term.
$^R D_t^{\alpha}u$ denotes the Riemann-Liouville fractional derivative, defined by
\begin{equation*}
  ^R D_t^{\alpha}u(\cdot,t)=\frac{1}{\Gamma(1-\alpha)}\frac{\partial}{\partial t}\int_{0}^{t}(t-s)^{-\alpha}u(\cdot,s)ds, ~0<\alpha<1.
\end{equation*}

The nonlinear fractional parabolic problems with time delay have attracted significant attention because of their widely range of applications in various fields, such as
  biology, physics and engineering \cite{hofling2013anomalous,arafa2012fractional,magin2006fractional,sebaa2006application,
  carpinteri2014fractals,west2012physics,2019Cheng,kilbas2006theory,deng2017}, etc.
Recently, plenty of numerical methods were presented for solving the linear time fractional diffusion equations.
 For instance, Chen et al. \cite{chen2010fractional} used finite difference methods and the Kansa method to approximate time and space derivatives, respectively. Dehghan et al. \cite{dehghan2015error} presented a full discrete scheme based on the finite difference methods in time direction and the meshless Galerkin method in space direction, and proved the scheme was unconditionally stable and convergent. Murio \cite{murio2008implicit} and Zhuang \cite{zhuang2006implicit} proposed a fully implicit finite difference numerical scheme, and obtained unconditionally stability. Jin et al. \cite{jin2017analysis} derived the time fractional Crank-Nicolson scheme to approximate Riemann-Liouville fractional derivative.  Li et al. \cite{li2021sun} used a transformation to develop some new schemes for solving the time-fractional problems. The new schemes admit some advantages for both capturing the initial layer and solving the models with small parameter $\alpha$.
 More studies can be found in \cite{2020li,lideng,yuste2006weight,yuste2005explicit,ce2012crank,LinXu:2007,che-li18,2017Li,sun-zhang,gun2019second,sweilam2014par,
 sweilam2012cra,zhang2016ana,rihan2010com, li2017gal,cao2013high,2017Stynes}.

Recently, it has been one of the hot spots in the investigations of different numerical methods for the nonlinear time fractional problems. For the analysis of the L1-type methods, we refer readers to the paper \cite{li2016analysis,li2018unconditionally,lin2007frac,liu2015fin,li2012fin,jin-li,2018Liao,li-wu-zhang}.
For the analysis of the convolution quadrature methods or the fractional Crank-Nicolson scheme, we refer to the recent papers \cite{wang2014com,zeng2015num,zhao2015com,lubich1988con,jin2017cor,liu2018time}. The key role in the convergence analysis of the schemes is the fractional  Gr\"{o}nwall type inequations. However, as pointed out in \cite{li2018,hendy2019convergence,hen2019}, the similar fractional  Gr\"{o}nwall type inequations can not be directly applied to study the convergence of numerical schemes for the nonlinear time fractional problems with delay.

In this paper, we present a linearized numerical scheme for solving the nonlinear fractional parabolic problems with time delay.
The time Riemann-Liouville fractional derivative is approximated by fractional Crank-Nicolson type time-stepping scheme, the spatial derivative is approximated by using the standard Galerkin finite element method, and the nonlinear term is approximated by the extrapolation method. To study the numerical behavior of the fully discrete scheme, we construct a novel discrete fractional type Gr\"{o}nwall inequality. With the inequality, we consider
the convergence of the numerical methods for  the nonlinear fractional parabolic problems with time delay.

The rest of this article is organized as follows.
In Section \ref{scheme}, we present a linearized numerical scheme for the nonlinear time fractional parabolic problems with delay and main convergence results. In Section \ref{error}, we present a detailed proof of the main results. In Section \ref{num}, numerical examples are given to confirm the theoretical results. Finally, the conclusions are presented in Section \ref{con}.

\section{Fractional Crank-Nicolson-Galerkin FEMs}\label{scheme}
Denote $ \mathcal{T}_{h}$ is a shape regular, quasi-uniform triangulation of the $\Omega$ into $d$-simplexes. Let $h=\max_{K\in \mathcal{T}_{h}}$\{diam $K$\}.
Let $X_{h}$ be the finite-dimensional subspace of $H_{0}^{1}(\Omega)$ consisting of continuous piecewise function on $ \mathcal{T}_{h}$.
Let $\Delta t =\tau/m_{\tau}$ be the time step size, where $m_{\tau}$ is a positive integer. Denote $N=\lceil \frac{T}{\Delta t} \rceil$,
$t_{j}=j\Delta t$, $j=-m_{\tau},-m_{\tau}+1,\ldots,0,1,2,\ldots,N$.

The approximation to the Riemann-Liouville fractional derivative at point $t=t_{n-\frac{\alpha}{2}}$  is given by \cite{jin2017analysis}:
\begin{eqnarray}\label{eq.2.1}
 {^R} D_{t_{n-\frac{\alpha}{2}}}^{\alpha}u(x,t)&~~=& \Delta t^{-\alpha}\sum_{i=0}^{n}\omega_{n-i}^{(\alpha)}u(x,t_{i})  + \mathcal{O}(\Delta t^{2})\nonumber\\
 &:~=& ~^R D_{\Delta t}^{\alpha}u^{n}
  +\mathcal{O}(\Delta t^{2}),
\end{eqnarray}
where \begin{equation*}
  \omega_{i}^{(\alpha)}
  =(-1)^{i}\frac{\Gamma(\alpha+1)}{\Gamma(i+1)\Gamma(\alpha-i+1)}.
\end{equation*}

For simplicity, denote $\|v\|=(\int_{\Omega}|v(x)|^2dx)^{\frac{1}{2}}$, $\eta^{n,\alpha}=(1-\frac{\alpha}{2})\eta^{n}+\frac{\alpha}{2}\eta^{n-1}$, $ \hat{\eta}^{n,\alpha}=(2-\frac{\alpha}{2})\eta^{n-1}-(1-\frac{\alpha}{2})\eta^{n-2}$, $t_n^\alpha=(n\Delta t)^\alpha $.

With the notation, the fully discrete scheme is to find $U_{h}^{n}~ \in~X_{h}$ such that
\begin{equation}\label{add1}
  \langle^R D_{\Delta t}^{\alpha}U_{h}^{n},v\rangle+\langle\nabla U_{h}^{n,\alpha},\nabla v\rangle
  =\langle  f(t_{n-\frac{\alpha}{2}},\hat{U}_{h}^{n,\alpha},U_{h}^{n-m_{\tau},\alpha}),v \rangle,~~\forall~ v~ \in ~X_{h},~~n=1,2,\cdots,N,
\end{equation}
and the initial condition
\begin{equation}\label{eq.2.4}
  U_{h}^{n}= R_h\varphi(x,t_{n}), ~~n=-m_{\tau},-m_{\tau}+1,\cdots,0,
\end{equation}
where $R_{h}:H_{0}^{1}(\Omega) \rightarrow X_h$ is Ritz projection operator which satisfies following  equality \cite{thomee1984galerkin}
\begin{equation}\label{eq.3.14}
  \langle \nabla R_{h}u,\nabla v \rangle=\langle \nabla u,\nabla v \rangle,~~\forall ~  u ~ \in ~ H_{0}^{1}(\Omega) \cap H^{2}(\Omega),~ v ~\in ~ X_{h}.
\end{equation}

We present the main convergence results here and leave its proof in the next section.

\begin{theorem}\label{theo1}
Suppose the system \eqref{eq.1.1} has a unique solution $u$ satisfying
\begin{equation}\label{excon}
\|u_0\|_{H^{r+1}}+\|u\|_{C([0,T];H^{r+1})}+\|u_t\|_{C([0,T];H^{r+1})}
+\|u_{tt}\|_{C([0,T];H^{2})} +\|{^R}D_{\Delta t}^{\alpha}u\|_{C([0,T];H^{r+1})} \le K,
\end{equation}
 and the source term $f(t,u(x,t),u(x,t-\tau))$ satisfies the  Lipschitz condition
 \begin{eqnarray}\label{eq.1.2}
|f(t,u(x,t),u(x,t-\tau))-f(t,v(x,t),v(x,t-\tau))| \nonumber \\
 \leq L_{1}|u(x,t)-v(x,t)| +L_{2}|u(x,t,\tau)-v(x,t,\tau)|,
 \end{eqnarray}
where $K$ is a constant independent of $n$, $h$ and $\Delta t$,
$L_{1}$ and $L_{2}$ are given positive constants. Then there exists a positive constant $\Delta t^{*}$ such that for $\Delta t\leq\Delta t^{*}$, the following estimate holds that
\begin{equation*}
  \|u^{n}-U_{h}^{n}\|\leq C_1^*(\Delta t^{2}+h^{r+1}),~~n=1,2,\cdots,N,
\end{equation*}
where $C_1^*$ is a positive constant independent of $h$ and $\Delta t$.
\end{theorem}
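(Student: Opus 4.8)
The plan is to proceed by the Ritz-projection error splitting combined with an energy argument that is then closed by the discrete fractional Gr\"{o}nwall inequality established earlier. First I would write $u^n-U_h^n=\rho^n+\theta^n$ with $\rho^n=u^n-R_hu^n$ and $\theta^n=R_hu^n-U_h^n$. The projection part is controlled immediately by the approximation property of the Ritz projection together with the regularity bound $\|u\|_{C([0,T];H^{r+1})}\le K$ in \eqref{excon}, giving $\|\rho^n\|\le Ch^{r+1}$. The decisive advantage of using $R_h$ is that, by its defining identity \eqref{eq.3.14}, the stiffness contribution of $\rho$ vanishes, i.e. $\langle\nabla\rho^{n,\alpha},\nabla v\rangle=0$ for all $v\in X_h$, so the projection error enters the $\theta$-equation only through the time-fractional term.

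Next I would derive the error equation for $\theta^n$. Testing \eqref{eq.1.1} weakly at $t=t_{n-\frac{\alpha}{2}}$, subtracting the scheme \eqref{add1}, and using the consistency relation \eqref{eq.2.1} for the fractional derivative, one obtains
\begin{equation*}
\langle {}^R D_{\Delta t}^{\alpha}\theta^n,v\rangle+\langle\nabla\theta^{n,\alpha},\nabla v\rangle=\langle\Phi^n,v\rangle,\qquad\forall\, v\in X_h,
\end{equation*}
where $\Phi^n$ collects four contributions: (i) the $\mathcal{O}(\Delta t^{2})$ truncation of the fractional Crank-Nicolson quadrature from \eqref{eq.2.1}; (ii) the $\mathcal{O}(\Delta t^{2})$ errors of the weighted average $u^{n,\alpha}$ and of the extrapolation $\hat u^{n,\alpha}$ toward the value at $t_{n-\frac{\alpha}{2}}$, controlled by $\|u_{tt}\|_{C([0,T];H^{2})}\le K$; (iii) the time-fractional projection term $-{}^R D_{\Delta t}^{\alpha}\rho^n$, which, since $I-R_h$ commutes with the discrete convolution in time, equals $(I-R_h)\,{}^R D_{\Delta t}^{\alpha}u^n$ and is therefore bounded by $Ch^{r+1}$ via the approximation property of $R_h$ and the bound on $\|{}^R D_{\Delta t}^{\alpha}u\|_{C([0,T];H^{r+1})}$ in \eqref{excon}; and (iv) the nonlinear difference, to be treated by \eqref{eq.1.2}.

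Then I would run the energy estimate by choosing the test function $v=\theta^{n,\alpha}$. Dropping the nonnegative term $\|\nabla\theta^{n,\alpha}\|^{2}$ and invoking the positive-definiteness of the fractional Crank-Nicolson operator ${}^R D_{\Delta t}^{\alpha}$, which I expect to lower-bound $\langle {}^R D_{\Delta t}^{\alpha}\theta^n,\theta^{n,\alpha}\rangle$ by a discrete fractional difference of $\|\theta^n\|^{2}$, the right-hand side is estimated with the Lipschitz condition \eqref{eq.1.2}. Writing the two nonlinear slots as $u(t_{n-\frac{\alpha}{2}})-\hat U_h^{n,\alpha}$ and $u(t_{n-m_\tau-\frac{\alpha}{2}})-U_h^{n-m_\tau,\alpha}$ and inserting $R_hu$, each splits into an extrapolation/projection part of size $\mathcal{O}(\Delta t^{2}+h^{r+1})$ plus the discrete history $\hat\theta^{n,\alpha}$ (hence $\|\theta^{n-1}\|,\|\theta^{n-2}\|$) and the delay history $\theta^{n-m_\tau,\alpha}$ (hence $\|\theta^{n-m_\tau}\|,\|\theta^{n-m_\tau-1}\|$). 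Crucially, the exact initialization \eqref{eq.2.4} forces $\theta^{j}=0$ for $j\le 0$, so the delay shift reaches only data that is already controlled, and since the nonlinearity is evaluated explicitly through $\hat U_h^{n,\alpha}$ and $U_h^{n-m_\tau,\alpha}$, no current-level $\theta^n$ is produced by the Lipschitz term.

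Finally I would close the estimate by applying the novel discrete fractional Gr\"{o}nwall inequality, which should yield $\|\theta^n\|\le C(\Delta t^{2}+h^{r+1})$ uniformly in $n$ provided $\Delta t\le\Delta t^{*}$; combined with the bound on $\rho^n$ and the triangle inequality this gives the theorem. I expect the main obstacle to be exactly the delay terms $\|\theta^{n-m_\tau}\|,\|\theta^{n-m_\tau-1}\|$ on the right-hand side: as the paper notes regarding \cite{li2018,hendy2019convergence}, the classical fractional Gr\"{o}nwall inequalities cannot absorb history that is displaced by a full delay interval, so the whole argument hinges on the tailored inequality designed to handle this shifted history. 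A secondary technical point is securing the positivity of the fractional Crank-Nicolson operator and choosing $\Delta t^{*}$ small enough that the inequality is valid and closes.
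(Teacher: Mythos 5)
Your proposal is correct and follows essentially the same route as the paper: the Ritz-projection splitting $u^n-U_h^n=\rho^n+\theta^n$, the error equation with the truncation term $P^n$, the nonlinear difference $R_1^n$, and the term ${}^R D_{\Delta t}^{\alpha}(u^n-R_hu^n)$ bounded via $\|{}^R D_{\Delta t}^{\alpha}u\|_{C([0,T];H^{r+1})}$, followed by testing with $\theta^{n,\alpha}$, invoking the positivity lemma for the fractional Crank--Nicolson operator, and closing with the tailored discrete fractional Gr\"{o}nwall inequality of Theorem \ref{the.3.3}. The only minor imprecision is your remark that no current-level $\theta^n$ appears on the right-hand side: the Young-inequality treatment of the tested products does produce a $\|\theta^{n,\alpha}\|^2$ term containing $\|\theta^n\|^2$, which is exactly why the Gr\"{o}nwall inequality carries a $\lambda_1 u^j$ term and a threshold $\Delta t^{*}$, as you anticipate.
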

\begin{remark}
The main contribution of the present study is that we obtain a discrete fractional Gr\"{o}nwall's inequality. Thanks to the inequality, the convergence of the fully discrete scheme for the nonlinear time fractional parabolic problems with delay can be obtained.
\end{remark}

\begin{remark}
At present, the convergence of the proposed scheme is proved without considering the weak singularity of the solutions. In fact, if the initial layer of the problem is taken into account, some corrected terms at the beginning. Then, the scheme can be of order two in the temporal direction for nonsmooth initial data and some incompatible source term. However, we still have the difficulties to get the similar discrete fractional Gr\"{o}nwall's inequality. We hope to leave the challenging problems in future.
\end{remark}

\section{Proof of the main results}\label{error}
 In this section, we will present a detailed proof of the main result.

\subsection{Preliminaries and discrete fractional Gr\"{o}nwall inequality }

Firstly, we review the definition of weights $\omega_{i}^{(\alpha)}$,
denote $g_{n}^{(\alpha)}=\sum_{i=0}^{n}\omega_{i}^{(\alpha)}$. Then we can get
\begin{equation*}
  \left\{
\begin{array}{ll}
\omega_{0}^{(\alpha)}=g_{0}^{(\alpha)},\\
\omega_{i}^{(\alpha)}=g_{i}^{(\alpha)}-g_{i-1}^{(\alpha)},~~1\leq i \leq n.\\
\end{array}\right.
\end{equation*}
Actually, it has been shown \cite{kumar2018fractional} that $\omega_{i}^{(\alpha)}$ and $g_{n}^{(\alpha)}$ process following properties \\
\textbf{(1)}~~The weights $\omega_{i}^{(\alpha)}$ can be evaluated recursively, $\omega_{i}^{(\alpha)}=\bigg(1-\frac{\alpha+1}{i}\bigg)\,\omega_{i-1}^{(\alpha)},~i\geq 1,~\omega_{0}^{(\alpha)}=1$,\\
  \textbf{(2)}~~The sequence $\{\omega_{i}^{(\alpha)} \}_{i=0}^{\infty}$ are monotone increasing $-1<\omega_{i}^{(\alpha)}<\omega_{i+1}^{(\alpha)}<0,~~ i\geq 1$,\\
 \textbf{(3)}~~The sequence $\{g_{i}^{(\alpha)} \}_{i=0}^{\infty}$ are monotone decreasing, $ g_{i}^{(\alpha)}> g_{i+1}^{(\alpha)}$ for $i \geq 0$ and $ g_{0}^{(\alpha)}=1$.\\
Noticing the definition of $g_{i}^{(\alpha)}$, $^R D_{\Delta t}^{\alpha}u^{n}$ can be rewritten as
\begin{equation}\label{eq.3.1}
  ^R D_{\Delta t}^{\alpha}u^{n}=\Delta t^{-\alpha}\sum_{i=1}^{n}(g_{i}^{(\alpha)}-g_{i-1}^{(\alpha)})u^{n-i}+\Delta t^{-\alpha}g_{0}^{(\alpha)}u^{n} .
\end{equation}
In fact, rearranging this identity yields
\begin{equation}\label{eq.3.2}
  ^R D_{\Delta t}^{\alpha}u^{n}=\Delta t^{-\alpha}\sum_{i=1}^{n}g_{n-i}^{(\alpha)}\delta_{t}u^{i}+\Delta t^{-\alpha}g_{n}^{(\alpha)}u^{0} ,
\end{equation}
where $\delta_{t}u^{i}=u^{i}-u^{i-1}$.
\begin{lemma}\label{le.3.1}(\cite{kumar2018fractional})
 Consider the sequence $\{\phi_{n}\}$ given by
 \begin{equation*}
   \phi_{0}=1,~~ \phi_{n}=\sum_{i=1}^{n}(g_{i-1}^{(\alpha)}-g_{i}^{(\alpha)})\phi_{n-i},~~~n\geq 1.
 \end{equation*}
 Then $\{\phi_{n}\}$ satisfies the following properties
\begin{align*}
 &(i)~~ 0< \phi_{n} < 1,~~~ \sum_{i=j}^{n}\phi_{n-i}g_{i-j}^{(\alpha)}=1,~~~~1\leq j \leq n, \\
&(ii)~~\frac{1}{\Gamma(\alpha)}\sum_{i=1}^{n}\phi_{n-i}\leq \frac{n^{\alpha}}{\Gamma(1+\alpha)},\\
&(iii)~~\frac{1}{\Gamma(\alpha)\Gamma(1+(k-1)\alpha)}\sum_{i=1}^{n-1}\phi_{n-i}i^{(k-1)\alpha}
\leq\frac{n^{k\alpha}}{\Gamma(1+\alpha)}, ~k=1,2\ldots.
\end{align*}
 \end{lemma}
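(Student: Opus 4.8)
The plan is to recognize $\{\phi_n\}$ as the inverse, under discrete convolution, of the sequence $\{\omega_i^{(\alpha)}\}$, and to read off a closed form. Since $g_{i-1}^{(\alpha)}-g_i^{(\alpha)}=-\omega_i^{(\alpha)}$, the defining recurrence rearranges to $\sum_{i=0}^n \omega_i^{(\alpha)}\phi_{n-i}=0$ for $n\ge1$ and $=1$ for $n=0$. Passing to generating functions with $\Phi(z)=\sum_{n\ge0}\phi_n z^n$ and using $\sum_{i\ge0}\omega_i^{(\alpha)}z^i=(1-z)^\alpha$, this is exactly $(1-z)^\alpha\Phi(z)=1$, hence $\Phi(z)=(1-z)^{-\alpha}$ and
\begin{equation*}
\phi_n=[z^n](1-z)^{-\alpha}=\frac{\Gamma(n+\alpha)}{\Gamma(\alpha)\,\Gamma(n+1)},\qquad n\ge0.
\end{equation*}
This closed form, together with the companion identity $\sum_{n\ge0}g_n^{(\alpha)}z^n=(1-z)^{\alpha-1}$ (the partial sums of the $\omega_i^{(\alpha)}$), is what I would use throughout.

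For (i) I would prove $0<\phi_n<1$ by induction rather than from the closed form, since it is cleaner: set $b_i=g_{i-1}^{(\alpha)}-g_i^{(\alpha)}=-\omega_i^{(\alpha)}$, which lies in $(0,1)$ by property (2), and note $\sum_{i=1}^n b_i=g_0^{(\alpha)}-g_n^{(\alpha)}=1-g_n^{(\alpha)}\in(0,1)$ by property (3). Assuming $0<\phi_m\le1$ for $m<n$ gives $\phi_n=\sum_{i=1}^n b_i\phi_{n-i}>0$ and $\phi_n\le\sum_{i=1}^n b_i=1-g_n^{(\alpha)}<1$. For the convolution identity, substituting $k=i-j$ turns $\sum_{i=j}^n\phi_{n-i}g_{i-j}^{(\alpha)}$ into $\sum_{k=0}^{n-j}\phi_{n-j-k}g_k^{(\alpha)}$, which is the coefficient of $z^{n-j}$ in $\Phi(z)(1-z)^{\alpha-1}=(1-z)^{-\alpha}(1-z)^{\alpha-1}=(1-z)^{-1}$; since all coefficients of $(1-z)^{-1}$ equal $1$, the identity follows.

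For (ii) I would sum the closed form. The partial sums have generating function $\Phi(z)/(1-z)=(1-z)^{-\alpha-1}$, so $\sum_{i=1}^n\phi_{n-i}=\sum_{m=0}^{n-1}\phi_m=\frac{\Gamma(n+\alpha)}{\Gamma(n)\,\Gamma(1+\alpha)}$. Dividing by $\Gamma(\alpha)$, the claim reduces to the gamma-ratio inequality $\frac{\Gamma(n+\alpha)}{\Gamma(n)}\le n^\alpha$, which is Gautschi's (Wendel's) inequality, together with $\Gamma(\alpha)\ge1$ on $(0,1)$.

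Finally, (iii) is the part requiring real work, with the base case $k=1$ already contained in (ii). For general $k$ I would bound each factor by Gautschi's inequality, $\phi_{n-i}\le(n-i)^{\alpha-1}/\Gamma(\alpha)$ for $1\le i\le n-1$, so the left side is at most $\frac{1}{\Gamma(\alpha)^2\Gamma(1+(k-1)\alpha)}\sum_{i=1}^{n-1}(n-i)^{\alpha-1}i^{(k-1)\alpha}$, and then compare the discrete convolution with the Beta integral
\begin{equation*}
\int_0^n (n-x)^{\alpha-1}x^{(k-1)\alpha}\,dx=n^{k\alpha}\,\frac{\Gamma(\alpha)\,\Gamma(1+(k-1)\alpha)}{\Gamma(1+k\alpha)},
\end{equation*}
which collapses the bound to $\frac{n^{k\alpha}}{\Gamma(\alpha)\Gamma(1+k\alpha)}\le\frac{n^{k\alpha}}{\Gamma(1+\alpha)}$, the last step being the elementary inequality $\Gamma(\alpha)\Gamma(1+k\alpha)\ge\Gamma(1+\alpha)$. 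The main obstacle is precisely this sum-to-integral comparison: the integrand $(n-x)^{\alpha-1}x^{(k-1)\alpha}$ is not monotone and is singular at $x=n$, so the estimate $\sum_{i=1}^{n-1}(n-i)^{\alpha-1}i^{(k-1)\alpha}\le\int_0^n(\cdot)\,dx$ must be justified carefully, by isolating the endpoint term and comparing $h(i)$ with $\int_{i-1}^i h$ or $\int_i^{i+1}h$ according to where each factor is monotone. I expect this bookkeeping, rather than any conceptual point, to be where the care is needed.
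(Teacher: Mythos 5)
Your proposal is correct, but note first that the paper does not actually prove this lemma: it is quoted verbatim from the reference [kumar2018fractional] (Kumar, Chaudhary and Kumar) with no proof supplied in the text, so there is no paper-internal argument to compare against. Judged on its own merits, your generating-function route is sound and self-contained: the identification $\sum_{i\ge0}\omega_i^{(\alpha)}z^i=(1-z)^{\alpha}$, hence $\Phi(z)=(1-z)^{-\alpha}$ and $\phi_n=\frac{\Gamma(n+\alpha)}{\Gamma(\alpha)\Gamma(n+1)}$, is correct; the convolution identity in (i) is exactly the coefficient of $z^{n-j}$ in $(1-z)^{-\alpha}(1-z)^{\alpha-1}=(1-z)^{-1}$; the partial-sum formula $\sum_{m=0}^{n-1}\phi_m=\frac{\Gamma(n+\alpha)}{\Gamma(n)\Gamma(1+\alpha)}$ combined with Wendel's bound $\Gamma(n+\alpha)\le n^{\alpha}\Gamma(n)$ and $\Gamma(\alpha)\ge1$ on $(0,1)$ gives (ii); and in (iii) the pointwise bound $\phi_{n-i}\le (n-i)^{\alpha-1}/\Gamma(\alpha)$ (again Wendel, valid since $n-i\ge1$), the Beta-integral evaluation, and the reduction $\Gamma(\alpha)\Gamma(1+k\alpha)\ge\Gamma(1+\alpha)$ (equivalent to $\Gamma(1+k\alpha)\ge\alpha$, which holds for all $k\ge1$, $0<\alpha<1$) all check out. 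One minor gap in (i): you invoke property (3) to get $1-g_n^{(\alpha)}\in(0,1)$, but property (3) only gives $g_n^{(\alpha)}<1$ and monotonicity; the strict positivity $g_n^{(\alpha)}>0$ needs a word, e.g.\ from the closed form $g_n^{(\alpha)}=\frac{\Gamma(n+1-\alpha)}{\Gamma(1-\alpha)\Gamma(n+1)}$, read off from $(1-z)^{\alpha-1}$.

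The one place where your self-assessment is off is the anticipated difficulty in (iii): the integrand $h(x)=(n-x)^{\alpha-1}x^{(k-1)\alpha}$ is \emph{not} non-monotone. Since $\alpha-1<0$, the factor $(n-x)^{\alpha-1}$ is increasing in $x$ on $(0,n)$, as is $x^{(k-1)\alpha}$, so $h$ is a product of two positive increasing functions and hence increasing on all of $(0,n)$ (equivalently, $h'(x)/h(x)=\frac{1-\alpha}{n-x}+\frac{(k-1)\alpha}{x}>0$). The standard left-endpoint comparison $h(i)\le\int_i^{i+1}h(x)\,dx$ for $i=1,\dots,n-1$ then yields $\sum_{i=1}^{n-1}h(i)\le\int_1^n h(x)\,dx\le\int_0^n h(x)\,dx$ in one line, the endpoint singularity at $x=n$ being integrable because $\alpha>0$. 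So the bookkeeping you flagged as the main obstacle dissolves entirely, and your proof closes without any case analysis.
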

\begin{lemma}\label{le.3.2}(\cite{kumar2018fractional})
Consider the matrix
\begin{equation*}
  W=2\mu(\Delta t)^{\alpha}
\left(
  \begin{array}{ccccc}
   0   &  \phi_{1} &  \cdots  & \phi_{n-2} & \phi_{n-1} \\
   0   &     0     &  \cdots  & \phi_{n-3} & \phi_{n-2} \\
\vdots &  \vdots   &  \ddots  &  \vdots    & \vdots    \\
   0   &     0     &  \cdots  &     0      & \phi_{1}  \\
   0   &     0     &  \cdots  &     0      &  0        \\
  \end{array}
\right)_{n\times n}.
\end{equation*}
Then, $W$ satisfies the following properties
\begin{align*}
&(i)~~
 W^{l}=0, ~~~l\geq n,\\
&(ii)~~
 W^{k}\overrightarrow{e}\leq
\frac{1}{\Gamma(1+k\alpha)}[(2\Gamma(\alpha)\mu t_{n}^{\alpha})^{k},(2\Gamma(\alpha)\mu t_{n-1}^{\alpha})^{k},\cdots,(2\Gamma(\alpha)\mu t_{1}^{\alpha})^{k}]',~~k=0,1,2,\ldots\\
&(iii)~~
\!\!\! \sum_{k=0}^{l} W^{k}\overrightarrow{e}=\sum_{k=0}^{n-1} W^{k}\overrightarrow{e}\leq[E_{\alpha}(2\Gamma(\alpha)\mu t_{n}^{\alpha}),E_{\alpha}(2\Gamma(\alpha)\mu t_{n-1}^{\alpha}),\cdots,E_{\alpha}(2\Gamma(\alpha)\mu t_{1}^{\alpha})]',~l\geq n,
\end{align*}
where $\overrightarrow{e}=[1,1,\ldots,1]'\in \mathbb{R}^{n}$, $\mu$ is a constant.
 \end{lemma}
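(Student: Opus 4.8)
The plan is to handle the three parts in the order (i), (ii), (iii), with (ii) carrying essentially all of the work and (i), (iii) following from it with little extra effort. Throughout I record the $(i,j)$ entry of $W$ as $W_{ij}=2\mu(\Delta t)^{\alpha}\phi_{j-i}$ when $j>i$ and $W_{ij}=0$ otherwise, and I write $v_i$ for the $i$-th component of a vector $v$. I note at the outset that the target vector in (ii) has $i$-th entry $(2\Gamma(\alpha)\mu t_{n-i+1}^{\alpha})^{k}/\Gamma(1+k\alpha)$, since the listed times run $t_n, t_{n-1},\dots,t_1$, and that all $\phi_{j-i}\ge 0$ by Lemma \ref{le.3.1}(i), so $W$ has nonnegative entries.

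For (i), I would simply observe that $W$ is strictly upper triangular of size $n\times n$ and hence nilpotent: any such matrix satisfies $W^{n}=0$, so $W^{l}=0$ for all $l\ge n$. This is also visible from the recursion used in (ii), whose bound collapses once the inner index range becomes empty.

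Part (ii) I would prove by induction on $k$. The base case $k=0$ is immediate, since $W^{0}\overrightarrow{e}=\overrightarrow{e}$ and $\Gamma(1)=1$. For the inductive step, expanding the $i$-th component of $W^{k+1}\overrightarrow{e}=W\,(W^{k}\overrightarrow{e})$ and inserting the inductive bound componentwise (legitimate because all entries of $W$ and of $W^{k}\overrightarrow{e}$ are nonnegative) yields
\[
(W^{k+1}\overrightarrow{e})_i\le 2\mu(\Delta t)^{\alpha}\sum_{j=i+1}^{n}\phi_{j-i}\,\frac{(2\Gamma(\alpha)\mu t_{n-j+1}^{\alpha})^{k}}{\Gamma(1+k\alpha)}.
\]
The substitution $p=n-j+1$ rewrites the inner sum as $\sum_{p=1}^{n-i}\phi_{(n-i+1)-p}\,p^{k\alpha}(\Delta t)^{k\alpha}$, which is precisely the quantity estimated by Lemma \ref{le.3.1}(iii) with length parameter $n-i+1$ and exponent $k\alpha$. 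Applying that bound and regrouping the constants $2\mu$, $\Gamma(\alpha)$, $\Delta t$ together with the identity $t_{n-i+1}^{\alpha}=((n-i+1)\Delta t)^{\alpha}$ should reproduce the claim with $k$ replaced by $k+1$, the Gamma factors combining as $\Gamma(1+k\alpha)\to\Gamma(1+(k+1)\alpha)$.

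Finally, for (iii) I would use (i) to truncate: since $W^{k}=0$ for $k\ge n$, we have $\sum_{k=0}^{l}W^{k}\overrightarrow{e}=\sum_{k=0}^{n-1}W^{k}\overrightarrow{e}$ whenever $l\ge n$. Bounding each summand by (ii) and enlarging the finite sum to the full series (all terms being nonnegative) gives, in the $i$-th component, $\sum_{k=0}^{\infty}(2\Gamma(\alpha)\mu t_{n-i+1}^{\alpha})^{k}/\Gamma(1+k\alpha)=E_{\alpha}(2\Gamma(\alpha)\mu t_{n-i+1}^{\alpha})$, which is the asserted entry. The main obstacle I anticipate is purely the bookkeeping in the inductive step of (ii): reindexing the weighted sum into the exact shape of Lemma \ref{le.3.1}(iii) and verifying that the $\Gamma$-factors and powers of $\Delta t$ telescope correctly, rather than any conceptual difficulty.
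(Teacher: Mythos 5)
The paper states this lemma without proof, importing it directly from \cite{kumar2018fractional}, so there is no internal proof to compare against; your argument is correct and is essentially the standard one from that reference: nilpotency of a strictly upper triangular matrix for (i), induction on $k$ via Lemma \ref{le.3.1}(iii) for (ii), and truncation plus the Mittag--Leffler series for (iii). One small caveat: for the Gamma factors to telescope as you describe ($\Gamma(1+k\alpha)\to\Gamma(1+(k+1)\alpha)$), the right-hand side of Lemma \ref{le.3.1}(iii) must read $n^{k\alpha}/\Gamma(1+k\alpha)$ rather than $n^{k\alpha}/\Gamma(1+\alpha)$ as printed in this paper; that is evidently a typo (the printed form would leave a fixed $\Gamma(1+\alpha)$ in the denominator at every step), and your use of the corrected version is the right reading.
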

\begin{theorem}\label{the.3.3}
 Assuming $\{u^{n}|~n=-m,-m+1,\ldots,0,1,2,\ldots\}$ and $\{f^{n}|~n=0,1,2,\ldots\}$ are nonnegative sequence, for $\lambda_{i}>0$, $i=1,2,3,4,5$, if
\begin{equation*}
^R D_{\Delta t}^{\alpha}u^{j}\leq \lambda_{1}u^{j}+\lambda_{2}u^{j-1}+\lambda_{3}u^{j-2}+\lambda_{4}u^{j-m}+\lambda_{5}u^{j-m-1}+f^{j},~
j=1,2\ldots,
\end{equation*}
then there exists a positive constant $\Delta t^{*}$, for $\Delta t<\Delta t^{*}$, the following holds
\begin{eqnarray*}
  u^{n} & \leq &  2\bigg(\lambda_{4}\frac{\Gamma(\alpha)t_{n}^{\alpha}}{\Gamma(1+\alpha)}M
 + \lambda_{5}\frac{\Gamma(\alpha)t_{n}^{\alpha}}{\Gamma(1+\alpha)}M
 +\max _{1\leq j \leq n}f^{j}\frac{\Gamma(\alpha)t_{n}^{\alpha}}{\Gamma(1+\alpha)} \\
&& +2M+\lambda_{2}M\Delta t^{\alpha}+2\lambda_{3}M\Delta t^{\alpha}\bigg)E_{\alpha}(2\Gamma(\alpha)\lambda t_{n}^{\alpha}),~~1\leq n \leq N,
\end{eqnarray*}
where $\lambda=\lambda_{1}
+\frac{1}{g_{0}^{(\alpha)}-g_{1}^{(\alpha)}}\lambda_{2}
+\frac{1}{g_{1}^{(\alpha)}-g_{2}^{(\alpha)}}\lambda_{3}
+\frac{1}{g_{m-1}^{(\alpha)}-g_{m}^{(\alpha)}}\lambda_{4}
+\frac{1}{g_{m}^{(\alpha)}-g_{m+1}^{(\alpha)}}\lambda_{5}$,
 $E_{\alpha}(z)=\sum_{k=0}^{\infty}\frac{z^{k}}{\Gamma(1+k\alpha)}$ is the Mittag-Leffler function,
and $M=\max\{u^{-m},u^{-m+1},\ldots,u^{0}\}$
 \end{theorem}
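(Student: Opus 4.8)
The plan is to invert the discrete operator ${}^{R}D_{\Delta t}^{\alpha}$ by a discrete convolution against the sequence $\{\phi_{n}\}$ of Lemma~\ref{le.3.1}, to reduce the hypothesis to a vector inequality of the form $\overrightarrow{u}\le W\overrightarrow{u}+\overrightarrow{b}$ governed by the matrix $W$ of Lemma~\ref{le.3.2} with $\mu=\lambda$, and finally to read off the Mittag-Leffler bound from Lemma~\ref{le.3.2}(iii).

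First I would multiply the hypothesis by $\Delta t^{\alpha}$ and set $P^{j}=\sum_{i=0}^{j}\omega_{j-i}^{(\alpha)}u^{i}$, so that $P^{0}=u^{0}$ and, for $1\le j\le n$, $P^{j}\le \Delta t^{\alpha}\big(\lambda_{1}u^{j}+\lambda_{2}u^{j-1}+\lambda_{3}u^{j-2}+\lambda_{4}u^{j-m}+\lambda_{5}u^{j-m-1}+f^{j}\big)$. Rewriting the defining recursion of $\phi_{n}$ with $g_{i-1}^{(\alpha)}-g_{i}^{(\alpha)}=-\omega_{i}^{(\alpha)}$ yields the convolution-inverse identity $\sum_{i=0}^{k}\omega_{i}^{(\alpha)}\phi_{k-i}=\delta_{k0}$, whence $u^{p}=\sum_{j=0}^{p}\phi_{p-j}P^{j}$ for every $p$. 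Since $\phi_{p-j}>0$ by Lemma~\ref{le.3.1}(i), substituting the bound on $P^{j}$ gives, for each $1\le p\le n$, $u^{p}\le\phi_{p}u^{0}+\Delta t^{\alpha}\sum_{j=1}^{p}\phi_{p-j}\big(\lambda_{1}u^{j}+\lambda_{2}u^{j-1}+\lambda_{3}u^{j-2}+\lambda_{4}u^{j-m}+\lambda_{5}u^{j-m-1}+f^{j}\big)$.

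The decisive step is to turn the five shifted sums into a single Gr\"onwall sum plus harmless data. From $\phi_{p}=\sum_{i=1}^{p}\big(g_{i-1}^{(\alpha)}-g_{i}^{(\alpha)}\big)\phi_{p-i}$, in which every summand is positive by property (3) and Lemma~\ref{le.3.1}(i), I read off the key bound $\phi_{p-s}\le\frac{1}{g_{s-1}^{(\alpha)}-g_{s}^{(\alpha)}}\,\phi_{p}$ for all $1\le s\le p$. Applying it with $s=1,2,m,m+1$ to the $\lambda_{2},\lambda_{3},\lambda_{4},\lambda_{5}$ sums, after first peeling off the terms whose shifted index is $\le 0$ (each of which is at most $M$), consolidates all ``current'' contributions into $\Delta t^{\alpha}\lambda\sum_{j=1}^{p}\phi_{p-j}u^{j}$ with exactly the stated $\lambda$. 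The peeled-off history blocks of the $\lambda_{4},\lambda_{5}$ sums, together with the whole $f$ sum, are controlled by Lemma~\ref{le.3.1}(ii) in the form $\Delta t^{\alpha}\sum_{\ell=0}^{p-1}\phi_{\ell}\le\frac{\Gamma(\alpha)t_{p}^{\alpha}}{\Gamma(1+\alpha)}$, producing the terms $\lambda_{4}M\frac{\Gamma(\alpha)t_{p}^{\alpha}}{\Gamma(1+\alpha)}$, $\lambda_{5}M\frac{\Gamma(\alpha)t_{p}^{\alpha}}{\Gamma(1+\alpha)}$ and $\max_{j}f^{j}\frac{\Gamma(\alpha)t_{p}^{\alpha}}{\Gamma(1+\alpha)}$, while the finitely many low-index terms of the $\lambda_{2},\lambda_{3}$ sums together with $\phi_{p}u^{0}$ contribute the remaining $M$ and $M\Delta t^{\alpha}$ data.

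Finally I would isolate the diagonal term $\Delta t^{\alpha}\lambda_{1}u^{p}$ on the left and choose $\Delta t^{*}$ so that $\Delta t^{\alpha}\lambda_{1}\le\tfrac12$ whenever $\Delta t\le\Delta t^{*}$; dividing by $1-\Delta t^{\alpha}\lambda_{1}\ge\tfrac12$ costs a factor at most $2$ and brings the inequality to $u^{p}\le 2\lambda\Delta t^{\alpha}\sum_{i=1}^{p-1}\phi_{p-i}u^{i}+b_{p}$ for every $1\le p\le n$, where $b_{p}$ is a fixed multiple of the data assembled above and $\max_{1\le p\le n}b_{p}$ is the bracketed quantity appearing in the statement. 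In matrix form, with $\overrightarrow{u}=[u^{n},\dots,u^{1}]'$ this is exactly $\overrightarrow{u}\le W\overrightarrow{u}+\overrightarrow{b}$ and $\mu=\lambda$. Since $W\ge 0$ and $W^{n}=0$ by Lemma~\ref{le.3.2}(i), iterating gives $\overrightarrow{u}\le\sum_{k=0}^{n-1}W^{k}\overrightarrow{b}\le(\max_{p}b_{p})\sum_{k=0}^{n-1}W^{k}\overrightarrow{e}$, and the first component together with Lemma~\ref{le.3.2}(iii) yields the asserted bound with the Mittag-Leffler factor $E_{\alpha}(2\Gamma(\alpha)\lambda t_{n}^{\alpha})$. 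The main obstacle is the bookkeeping in the decisive step: cleanly separating history indices from current indices in all five shifted sums so that the consolidated coefficient is precisely $\lambda$, and checking that the reduced inequality holds for \emph{every} row $p$, not only $p=n$, so that Lemma~\ref{le.3.2} applies verbatim.
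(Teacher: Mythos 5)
Your proposal is correct and follows essentially the same route as the paper's proof: convolving the hypothesis with the sequence $\{\phi_{j}\}$ (your $\delta_{k0}$ inverse identity is equivalent to the paper's use of $\sum_{i=j}^{n}\phi_{n-i}g_{i-j}^{(\alpha)}=1$), peeling off the history terms via Lemma \ref{le.3.1}(ii), consolidating the shifted sums with the bound $\phi_{p-s}\le\phi_{p}/(g_{s-1}^{(\alpha)}-g_{s}^{(\alpha)})$ into the single coefficient $\lambda$, halving via $\Delta t^{*}=\sqrt[\alpha]{1/(2\lambda_{1})}$, and iterating the nilpotent matrix inequality with Lemma \ref{le.3.2}. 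The only differences are cosmetic (you apply the $\phi$ comparison at the scalar level rather than through the matrices $W_{2},\dots,W_{5}$, and you are slightly more explicit that the reduced inequality must hold for every row $p$), so no further changes are needed.
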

 \begin{proof}
 By using the definition of $^R D_{\Delta t}^{\alpha}u^{n}$ in \eqref{eq.3.2}, we have
 \begin{equation}\label{eq.3.3}
   \sum_{k=1}^{j}g_{j-k}^{(\alpha)}\delta_{t}u^{k}
   +g_{j}^{(\alpha)}u^{0}
   \leq \Delta t^{\alpha}(\lambda_{1}u^{j}
   +\lambda_{2}u^{j-1}
   +\lambda_{3}u^{j-2}
   +\lambda_{4}u^{j-m}
   +\lambda_{5
   }u^{j-m-1})
   +\Delta t^{\alpha}f^{j}.
 \end{equation}
 Multiplying the equation \eqref{eq.3.3} by $\phi_{n-j}$ and summing the index $j$ from $1$ to $n$, we get
\begin{eqnarray}\label{eq.3.4}
 \!\!\!\!\!\!\!\!\!\!\!\sum_{j=1}^{n}\phi_{n-j}\sum_{k=1}^{j}g_{j-k}^{(\alpha)}\delta_{t}u^{k}
&\!\!\!\!\!\!\!\!\! \leq \!\!\!\!\!\!\!\!\! & \Delta t^{\alpha} \sum_{j=1}^{n}\phi_{n-j}(\lambda_{1}u^{j}
\! + \!  \lambda_{2}u^{j-1}
\! + \! \lambda_{3}u^{j-2}
\! + \! \lambda_{4}u^{j-m}
\! + \! \lambda_{5}u^{j-m-1}) \nonumber \\
&&+\Delta t^{\alpha} \sum_{j=1}^{n}\phi_{n-j}f^{j}
-\sum_{j=1}^{n}\phi_{n-j}g_{j}^{(\alpha)}u^{0}.\end{eqnarray}
We change the order of summation and make use of the definition of $\phi_{n-j}$ to obtain
\begin{equation}\label{eq.3.5}
  \sum_{j=1}^{n}\phi_{n-j}\sum_{k=1}^{j}g_{j-k}^{(\alpha)}\delta_{t}u^{k}
   =\sum_{k=1}^{n}\delta_{t}u^{k}\sum_{j=1}^{k}\phi_{n-j}g_{j-k}^{(\alpha)}
  =\sum_{k=1}^{n}\delta_{t}u^{k}=u^{n}-u^{0},
\end{equation}
and using the Lemma \ref{le.3.1}, we have
\begin{equation}\label{eq.3.6}
  \Delta t^{\alpha} \sum_{j=1}^{n}\phi_{n-j}f^{j}
  \leq \Delta t^{\alpha}  \max _{1\leq j \leq n}f^{j} \sum_{j=1}^{n}\phi_{n-j}
  \leq \Delta t^{\alpha}  \max _{1\leq j \leq n}f^{j}\frac{\Gamma(\alpha)n^{\alpha}}{\Gamma(1+\alpha)}
  =\max _{1\leq j \leq n}f^{j}\frac{\Gamma(\alpha)t_{n}^{\alpha}}{\Gamma(1+\alpha)}.
\end{equation}
Noticing $g_{j}^{(\alpha)}$ is monotone  decreasing and using Lemma \ref{le.3.1}, we have
\begin{equation}\label{eq.3.7}
  -\sum_{j=1}^{n}\phi_{n-j}g_{j}^{(\alpha)}u^{0}
  \leq \sum_{j=1}^{n}\phi_{n-j}g_{j}^{(\alpha)}u^{0}
  \leq u^{0} \sum_{j=1}^{n}\phi_{n-j}g_{j-1}^{(\alpha)}
  =u^{0}.
\end{equation}
Substituting \eqref{eq.3.5}, \eqref{eq.3.6} and \eqref{eq.3.7} into \eqref{eq.3.4}, we can obtain
\begin{equation}\label{eq.3.8}
  u^{n}
\! \leq \! \Delta t^{\alpha} \! \sum_{j=1}^{n}\phi_{n-j}(\lambda_{1}u^{j}
  +\lambda_{2}u^{j-1}
  +\lambda_{3}u^{j-2}
  +\lambda_{4}u^{j-m}
  +\lambda_{5}u^{j-m-1})
 + 2u^{0}
  +\max _{1\leq j \leq n}f^{j}\frac{\Gamma(\alpha)t_{n}^{\alpha}}{\Gamma(1+\alpha)}.
\end{equation}
Applying the Lemma \ref{le.3.1}, we have
\begin{eqnarray*}
\Delta t^{\alpha}\sum_{j=1}^{m}\phi_{n-j}u^{j-m}\leq\frac{\Gamma(\alpha)t_{n}^{\alpha}}{\Gamma(1+\alpha)}M,
~~\Delta t^{\alpha}\sum_{j=1}^{m+1}\phi_{n-j}u^{j-m-1}\leq\frac{\Gamma(\alpha)t_{n}^{\alpha}}{\Gamma(1+\alpha)}M.
\end{eqnarray*}
Therefore
\begin{eqnarray*}
&&\lambda_{4}\Delta t^{\alpha}\sum_{j=1}^{m}\phi_{n-j}u^{j-m}
 + \lambda_{5}\Delta t^{\alpha}\sum_{j=1}^{m+1}\phi_{n-j}u^{j-m-1}+2u^{0}
 +\lambda_{2}\Delta t^{\alpha}\phi_{n-1}u^{0} \\
&&~~~~~~~~~
 +\lambda_{3}\Delta t^{\alpha}(\phi_{n-1}u^{-1}+\phi_{n-2}u^{0})\\
&\leq &\lambda_{4}\frac{\Gamma(\alpha)t_{n}^{\alpha}}{\Gamma(1+\alpha)}M
 + \lambda_{5}\frac{\Gamma(\alpha)t_{n}^{\alpha}}{\Gamma(1+\alpha)}M
 +2M+\lambda_{2}M\Delta t^{\alpha}
 +2\lambda_{3}M\Delta t^{\alpha}.
 \end{eqnarray*}
 Denote
 \begin{equation*}
 \Psi _{n}= \lambda_{4}\frac{\Gamma(\alpha)t_{n}^{\alpha}}{\Gamma(1+\alpha)}M
 + \lambda_{5}\frac{\Gamma(\alpha)t_{n}^{\alpha}}{\Gamma(1+\alpha)}M
 +\max _{1\leq j \leq n}f^{j}\frac{\Gamma(\alpha)t_{n}^{\alpha}}{\Gamma(1+\alpha)}
 +2M+\lambda_{2}M\Delta t^{\alpha}+2\lambda_{3}M\Delta t^{\alpha},
 \end{equation*}
\eqref{eq.3.8} can be rewritten as
\begin{eqnarray*}
(1-\lambda_{1}\Delta t^{\alpha})u^{n}
&\leq&
\lambda_{1}\Delta t^{\alpha}\sum_{j=1}^{n-1}\phi_{n-j}u^{j}
 + \lambda_{2}\Delta t^{\alpha}\sum_{j=2}^{n}\phi_{n-j}u^{j-1}
 +\lambda_{3}\Delta t^{\alpha}\sum_{j=3}^{n}\phi_{n-j}u^{j-2}\\
 &&+ \lambda_{4}\Delta t^{\alpha}\sum_{j=m+1}^{n}\phi_{n-j}u^{j-m}
 + \lambda_{5}\Delta t^{\alpha}\sum_{j=m+2}^{n}\phi_{n-j}u^{j-m-1}+\Psi _{n}.
\end{eqnarray*}
Let $\Delta t^*=\sqrt[\alpha]{\frac{1}{2\lambda_{1}}}$, when $\Delta t \leq \Delta t^*$, we have
\begin{eqnarray}\label{eq.3.12}
  u^{n}
 \!\leq \! 2 \Psi _{n}
  +2\Delta t^{\alpha} \bigg[\lambda_{1}\sum_{j=1}^{n-1}\phi_{n-j}u^{j}
   +\lambda_{2}\sum_{j=2}^{n}\phi_{n-j}u^{j-1}
   +\lambda_{3}\sum_{j=3}^{n}\phi_{n-j}u^{j-2} \\
  +\lambda_{4}\sum_{j=m+1}^{n}\phi_{n-j}u^{j-m}
   +\lambda_{5}\sum_{j=m+2}^{n}\phi_{n-j}u^{j-m-1}\bigg].
\end{eqnarray}
Let $V=(u^{n},u^{n-1},\cdots,u^{1})^{T}$, then \eqref{eq.3.12} can be rewritten in  the following matrix form
\begin{equation}\label{eq.3.13}
  V\leq 2\Psi _{n}\overrightarrow{e}+(\lambda_{1}W_{1}+\lambda_{2}W_{2}+\lambda_{3}W_{3}+\lambda_{4}W_{4}+\lambda_{5}W_{5})V,
\end{equation}
where
\begin{equation*}
  W_{1}=2(\Delta t)^{\alpha}
\left(
  \begin{array}{cccccc}
     0  &  \phi_{1} &  \phi_{2}  &  \cdots  &  \phi_{n-2}  &   \phi_{n-1} \\
     0  &     0     &  \phi_{1}  &  \cdots  &  \phi_{n-3}  &   \phi_{n-2} \\
\vdots  &  \vdots   &  \vdots    &  \ddots  &  \vdots      &   \vdots     \\
     0  &     0     &     0      &  \cdots  &  \phi_{1}    &   \phi_{2}   \\
     0  &     0     &     0      &  \cdots  &      0       &   \phi_{1}   \\
     0  &     0     &     0      &     0    &  \cdots      &      0       \\
  \end{array}
\right)_{n\times n},
\end{equation*}
\begin{equation*}
  W_{2}=2(\Delta t)^{\alpha}
\left(
  \begin{array}{cccccc}
     0 &  \phi_{0} &   \phi_{1}  &  \cdots &  \phi_{n-3}  &  \phi_{n-2} \\
     0 &    0      &   \phi_{0}  &  \cdots &  \phi_{n-4}  &  \phi_{n-3} \\
\vdots &  \vdots   &   \vdots    &  \ddots &   \vdots     &  \vdots     \\
     0 &    0      &      0      &  \cdots &  \phi_{0}    &  \phi_{1}   \\
     0 &    0      &      0      &  \cdots &      0       &  \phi_{0}   \\
     0 &    0      &      0      &     0   &  \cdots      &      0      \\
  \end{array}
\right)_{n\times n},
\end{equation*}
\begin{equation*}
  W_{3}=2(\Delta t)^{\alpha}
\left(
  \begin{array}{cccccc}
     0 &    0      &   \phi_{0}  &  \cdots &  \phi_{n-4}  &  \phi_{n-3} \\
     0 &    0      &      0      &  \cdots &  \phi_{n-5}  &  \phi_{n-4} \\
\vdots &  \vdots   &   \vdots    &  \ddots &   \vdots     &  \vdots     \\
     0 &    0      &      0      &  \cdots &      0    &  \phi_{0}   \\
     0 &    0      &      0      &  \cdots &      0       &      0     \\
     0 &    0      &      0      &     0   &      0       &      0      \\
  \end{array}
\right)_{n\times n},
\end{equation*}
\begin{equation*}
   W_{4}=2(\Delta t)^{\alpha}
\left(
  \begin{array}{cccccccc}
     0 &  \cdots  &     0    &  \phi_{0} &  \phi_{1} &  \cdots  &  \phi_{n-m-2}  &  \phi_{n-m-1} \\
     0 &  \cdots  &     0    &    0      &  \phi_{0} &  \cdots  &  \phi_{n-m-3}  &  \phi_{n-m-2}  \\
\vdots &  \cdots  &  \vdots  &  \vdots   &  \vdots   &  \ddots  &  \vdots        &  \vdots        \\
     0 &  \cdots  &     0    &    0      &     0     &  \cdots  &  \phi_{0}      &  \phi_{1}      \\
     0 &  \cdots  &     0    &    0      &     0     &  \cdots  &     0          &  \phi_{0}      \\
     0 &          &     0    &    0      &     0     &  \cdots  &     0          &   0            \\
\vdots &  \cdots  &  \vdots  &  \vdots   &  \vdots   &  \cdots  &   \vdots       &  \vdots        \\
     0 &  \cdots  &     0    &    0      &     0     &  \cdots  &     0          &   0            \\
  \end{array}
\right)_{n\times n},
\end{equation*}
\begin{equation*}
   W_{5}=2(\Delta t)^{\alpha}
\left(
  \begin{array}{cccccccc}
     0 &  \cdots  &     0    &    0      &  \phi_{0} &  \cdots  &  \phi_{n-m-3}  &  \phi_{n-m-2} \\
     0 &  \cdots  &     0    &    0      &     0     &  \cdots  &  \phi_{n-m-4}  &  \phi_{n-m-3}  \\
\vdots &  \cdots  &  \vdots  &  \vdots   &  \vdots   &  \ddots  &  \vdots        &  \vdots        \\
     0 &  \cdots  &     0    &    0      &     0     &  \cdots  &     0          &  \phi_{0}      \\
     0 &  \cdots  &     0    &    0      &     0     &  \cdots  &     0          &     0           \\
     0 &          &     0    &    0      &     0     &  \cdots  &     0          &     0            \\
\vdots &  \cdots  &  \vdots  &  \vdots   &  \vdots   &  \cdots  &   \vdots       &    \vdots        \\
     0 &  \cdots  &     0    &    0      &     0     &  \cdots  &     0          &      0            \\
  \end{array}
\right)_{n\times n}.
\end{equation*}
Since the definition of $\phi_{n}$, we have
\begin{equation*}
 \phi_{n-j} \leq \frac{1}{g_{j-1}^{(\alpha)}-g_{j}^{(\alpha)}}\phi_{n}.
\end{equation*}
Then,
\begin{eqnarray*}
 && W_{2}V \leq \frac{1}{g_{0}^{(\alpha)}-g_{1}^{(\alpha)}}W_{1}V,
  ~~~~W_{3}V \leq \frac{1}{g_{1}^{(\alpha)}-g_{2}^{(\alpha)}}W_{1}V, \\
 &&W_{4}V \leq \frac{1}{g_{m-1}^{(\alpha)}-g_{m}^{(\alpha)}}W_{1}V,
  ~~~~W_{5}V \leq \frac{1}{g_{m}^{(\alpha)}-g_{m+1}^{(\alpha)}}W_{1}V.
\end{eqnarray*}
Hence, \eqref{eq.3.13} can be shown as follows
\begin{eqnarray*}
 V
\! & \leq & \! \bigg(\lambda_{1}
\! +\!\frac{1}{g_{0}^{(\alpha)}-g_{1}^{(\alpha)}}\lambda_{2}
\! + \! \frac{1}{g_{1}^{(\alpha)}-g_{2}^{(\alpha)}}\lambda_{3}
\! + \! \frac{1}{g_{m-1}^{(\alpha)}-g_{m}^{(\alpha)}}\lambda_{4}
\! + \! \frac{1}{g_{m}^{(\alpha)}-g_{m+1}^{(\alpha)}}\lambda_{5}\bigg)W_{1}V
\! + \! 2\Psi _{n}\overrightarrow{e}\\
&=&WV+2\Psi _{n}\overrightarrow{e},
\end{eqnarray*}
where $W=\lambda W_{1}$.

Therefore,
\begin{eqnarray*}
 V&\leq & WV+2\Psi _{n}\overrightarrow{e}\\
&\leq & W(WV+2\Psi _{n}\overrightarrow{e})+2\Psi _{n}\overrightarrow{e}\\
&=&W^{2}V+2\Psi _{n}\sum_{j=0}^{1}W^{j}\overrightarrow{e}\\
&\leq & \cdots\\
&\leq & W^{n}V+2\Psi _{n}\sum_{j=0}^{n-1}W^{j}\overrightarrow{e}.
\end{eqnarray*}
According to Lemma \ref{le.3.2}, the result can be proved.
\end{proof}
\begin{lemma}\label{le.3.4}(\cite{kumar2018fractional})
For any sequence $\{e^{k}\}_{k=0}^{N}\subset X_{h}$, the following inequality holds
\begin{equation*}
  \langle ^R D_{\Delta t}^{\alpha}e^{k},\bigg(1-\frac{\alpha}{2}\bigg)e^{k}+\frac{\alpha}{2}e^{k-1} \rangle \geq\frac{1}{2}~ ^R D_{\Delta t}^{\alpha}\|e^{k}\|^{2},~~~~~ ~1\leq k \leq N.
\end{equation*}
\end{lemma}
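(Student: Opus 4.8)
The plan is to prove the inequality directly and purely algebraically, by collapsing the bilinear expression on the left into one completed square plus a manifestly signed remainder; no reduction to scalar sequences and no restriction on $\Delta t$ will be needed. Write $P:=e^{k,\alpha}=(1-\frac{\alpha}{2})e^{k}+\frac{\alpha}{2}e^{k-1}$ and recall from \eqref{eq.2.1} that $^R D_{\Delta t}^{\alpha}e^{k}=\Delta t^{-\alpha}\sum_{i=0}^{k}\omega_{k-i}^{(\alpha)}e^{i}$, so that the very same weights govern $^R D_{\Delta t}^{\alpha}\|e^{k}\|^{2}=\Delta t^{-\alpha}\sum_{i=0}^{k}\omega_{k-i}^{(\alpha)}\|e^{i}\|^{2}$. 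Multiplying the target inequality by $2\Delta t^{\alpha}>0$, the task reduces to showing that $\sum_{i=0}^{k}\omega_{k-i}^{(\alpha)}\big(2\langle e^{i},P\rangle-\|e^{i}\|^{2}\big)\ge 0$.

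The key step is the elementary inner-product identity $2\langle e^{i},P\rangle-\|e^{i}\|^{2}=\|P\|^{2}-\|e^{i}-P\|^{2}$, valid for each $i$ in $X_{h}$. Substituting it and using $\sum_{i=0}^{k}\omega_{k-i}^{(\alpha)}=\sum_{j=0}^{k}\omega_{j}^{(\alpha)}=g_{k}^{(\alpha)}$ collapses the left-hand side to
\[
g_{k}^{(\alpha)}\|P\|^{2}-\sum_{i=0}^{k}\omega_{k-i}^{(\alpha)}\|e^{i}-P\|^{2}.
\]
At this point I would invoke the sign structure from property (2): only the top weight $\omega_{0}^{(\alpha)}=1$ is positive, while $\omega_{j}^{(\alpha)}<0$ for $j\ge 1$. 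Hence every term of the remaining sum is nonnegative except the single one from $i=k$, which contributes $-\|e^{k}-P\|^{2}$; together with $g_{k}^{(\alpha)}\|P\|^{2}\ge 0$ (property (3)), this is the only quantity that can be negative.

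The main obstacle, and precisely the reason for the Crank--Nicolson weighting $(1-\frac{\alpha}{2},\frac{\alpha}{2})$, is to absorb that lone negative term. Here I would use the explicit relations $e^{k}-P=\frac{\alpha}{2}\,\delta_{t}e^{k}$ and $e^{k-1}-P=-(1-\frac{\alpha}{2})\,\delta_{t}e^{k}$, together with $|\omega_{1}^{(\alpha)}|=\alpha$, so that the $i=k$ and $i=k-1$ contributions combine into
\[
\Big(-\tfrac{\alpha^{2}}{4}+\alpha\big(1-\tfrac{\alpha}{2}\big)^{2}\Big)\|\delta_{t}e^{k}\|^{2}
=\tfrac{\alpha}{4}(1-\alpha)(4-\alpha)\,\|\delta_{t}e^{k}\|^{2}.
\]
For $0<\alpha<1$ both factors $1-\alpha$ and $4-\alpha$ are positive, so this combined term is nonnegative; this is exactly where the hypothesis $0<\alpha<1$ enters decisively.

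Assembling the pieces, the left-hand side equals
\[
g_{k}^{(\alpha)}\|P\|^{2}+\tfrac{\alpha}{4}(1-\alpha)(4-\alpha)\,\|\delta_{t}e^{k}\|^{2}+\sum_{i=0}^{k-2}\big|\omega_{k-i}^{(\alpha)}\big|\,\|e^{i}-P\|^{2},
\]
a sum of three nonnegative quantities, which yields the claim after dividing by $2\Delta t^{\alpha}$. I expect the only delicate bookkeeping to be the clean isolation of the $i=k$ and $i=k-1$ terms and the verification that the leftover coefficient $\frac{\alpha}{4}(1-\alpha)(4-\alpha)$ is nonnegative; everything else is the completing-the-square identity and the sign information already recorded in properties (2) and (3), and the argument holds for every $k$ with no Gr\"{o}nwall machinery.
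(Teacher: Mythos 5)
Your argument is correct, and it is worth noting that the paper itself offers no proof of this lemma at all --- it is quoted from \cite{kumar2018fractional} --- so you have in effect supplied a self-contained proof of a cited result. The algebra checks out: the identity $2\langle e^{i},P\rangle-\|e^{i}\|^{2}=\|P\|^{2}-\|e^{i}-P\|^{2}$, the reindexing $\sum_{i=0}^{k}\omega_{k-i}^{(\alpha)}=g_{k}^{(\alpha)}$, the relations $e^{k}-P=\frac{\alpha}{2}\delta_{t}e^{k}$ and $e^{k-1}-P=-(1-\frac{\alpha}{2})\delta_{t}e^{k}$ with $\omega_{1}^{(\alpha)}=-\alpha$, and the coefficient $-\frac{\alpha^{2}}{4}+\alpha(1-\frac{\alpha}{2})^{2}=\frac{\alpha}{4}(1-\alpha)(4-\alpha)\ge 0$ for $0<\alpha<1$ are all verified, and the leftover sum over $i\le k-2$ carries the nonnegative coefficients $-\omega_{k-i}^{(\alpha)}>0$ by property (2). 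This completing-the-square route is a clean alternative to the usual argument in the literature, which splits off the $i=k$ term and bounds the negatively weighted cross terms $\langle e^{i},P\rangle$ by Cauchy--Schwarz; your version makes the sign structure fully explicit and isolates exactly where $0<\alpha<1$ is used. One small patch: you justify $g_{k}^{(\alpha)}\ge 0$ by appeal to property (3), but that property as stated only gives monotone decrease from $g_{0}^{(\alpha)}=1$, which does not by itself exclude negative values. Positivity is nonetheless immediate, e.g.\ from $g_{n}^{(\alpha)}=\sum_{i=0}^{n}(-1)^{i}\binom{\alpha}{i}=\prod_{j=1}^{n}\frac{j-\alpha}{j}>0$, or from the fact that the partial sums decrease to $(1-1)^{\alpha}=0$; add one such line and the proof is complete.
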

\begin{lemma}\label{le.3.5}(\cite{rannacher1982some})
There exists a positive constant $C_{\Omega}$, independent of h, for any $v\in H^s(\Omega)\cap H_0^1(\Omega)$, such that
\begin{equation}\label{femjl}
  \|v-R_h v\|_{L^2}+h\|\nabla(v-R_h v)\|_{L^2}\le C_{\Omega} h^s\|v\|_{H^s}, \quad 1\le s\le r+1.
\end{equation}
\end{lemma}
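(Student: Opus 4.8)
The plan is to split the claim into its two terms and treat the energy (gradient) error first by a best-approximation argument, then upgrade to the sharper $L^2$ bound by duality. Write $e = v - R_h v$ and note that the defining relation \eqref{eq.3.14} gives the Galerkin orthogonality $\langle \nabla e, \nabla w\rangle = 0$ for all $w \in X_h$; that is, $R_h v$ is the energy-orthogonal projection of $v$ onto $X_h$. Consequently, for every $w \in X_h$,
\begin{equation*}
\|\nabla e\|_{L^2}^2 = \langle \nabla e, \nabla(v-R_h v)\rangle = \langle \nabla e, \nabla(v-w)\rangle \le \|\nabla e\|_{L^2}\,\|\nabla(v-w)\|_{L^2},
\end{equation*}
so that $\|\nabla e\|_{L^2} \le \inf_{w\in X_h}\|\nabla(v-w)\|_{L^2}$. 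Taking $w = I_h v$, where $I_h: H^s(\Omega)\cap H_0^1(\Omega)\to X_h$ is a standard (quasi-)interpolation operator, and invoking the classical interpolation estimate $\|\nabla(v-I_h v)\|_{L^2}\le C h^{s-1}\|v\|_{H^s}$ valid for $1\le s\le r+1$ on a shape-regular, quasi-uniform $\mathcal{T}_h$, I would obtain $\|\nabla e\|_{L^2}\le C h^{s-1}\|v\|_{H^s}$. This already controls the second term, since $h\|\nabla e\|_{L^2}\le C h^{s}\|v\|_{H^s}$.

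For the $L^2$ term I would invoke the Aubin--Nitsche duality (Nitsche trick). Introduce the auxiliary problem of finding $\psi\in H_0^1(\Omega)$ with $-\Delta\psi = e$ in $\Omega$; because $\Omega$ is a bounded convex polygon (polyhedron), elliptic regularity yields $\psi\in H^2(\Omega)$ with $\|\psi\|_{H^2}\le C_\Omega\|e\|_{L^2}$. Testing this identity against $e\in H_0^1(\Omega)$, integrating by parts, and then subtracting $I_h\psi\in X_h$ via Galerkin orthogonality gives
\begin{equation*}
\|e\|_{L^2}^2 = \langle \nabla e, \nabla\psi\rangle = \langle \nabla e, \nabla(\psi - I_h\psi)\rangle \le \|\nabla e\|_{L^2}\,\|\nabla(\psi - I_h\psi)\|_{L^2}.
\end{equation*}
Applying the interpolation estimate $\|\nabla(\psi - I_h\psi)\|_{L^2}\le C h\|\psi\|_{H^2}\le C C_\Omega h\|e\|_{L^2}$ and cancelling one factor of $\|e\|_{L^2}$ leaves $\|e\|_{L^2}\le C h\|\nabla e\|_{L^2}$. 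Combining this with the gradient bound from the first step gives $\|e\|_{L^2}\le C h^{s}\|v\|_{H^s}$, and summing the two contributions yields \eqref{femjl} with $C_\Omega$ absorbing all mesh-independent constants.

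The interpolation bounds used above are routine, following from the Bramble--Hilbert lemma combined with scaling to a reference element and the shape-regularity and quasi-uniformity of the triangulation. The one genuinely structural ingredient, and hence the main step to watch, is the $H^2$ elliptic regularity estimate $\|\psi\|_{H^2}\le C_\Omega\|e\|_{L^2}$ in the duality argument: this is precisely where the convexity of $\Omega$ assumed in \eqref{eq.1.1} enters, since it guarantees full $H^2$ regularity of the Dirichlet problem for the Laplacian despite the domain having only a piecewise smooth boundary. On a nonconvex polygonal domain the re-entrant corner singularities would destroy this regularity and the optimal $L^2$ convergence rate could degrade, so the convexity hypothesis is essential to the present estimate.
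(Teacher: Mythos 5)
The paper gives no proof of this lemma; it is imported verbatim from the literature (see \cite{rannacher1982some} and the standard treatment in \cite{thomee1984galerkin}). Your argument---energy-norm best approximation via Galerkin orthogonality, a (quasi-)interpolation estimate, and the Aubin--Nitsche duality step with the convexity of $\Omega$ supplying the $H^2$ regularity $\|\psi\|_{H^2}\le C_\Omega\|e\|_{L^2}$---is exactly the standard proof found in those references, and it is correct.
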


\subsection{Proof of Theorem \ref{theo1}}

Now, we are ready to prove our main results.
\begin{proof}
Taking $t=t_{n-\frac{\alpha}{2}}$ in the first equation \eqref{eq.1.1} we can find that $u^n$ satisfies the following equation
\begin{equation}\label{eq.2.5}
   \langle^R D^{\alpha}_{\Delta t}u^{n},v\rangle+\langle\nabla u^{n,\alpha },\nabla v\rangle
   =\langle f(t_{n-\frac{\alpha}{2}},\hat{u}^{n,\alpha},u^{n-m_{\tau},\alpha}),v \rangle+\langle P^n,v\rangle,
\end{equation}
for $n=1,2,3,\ldots,N$ and $\forall v\in X_h$,
where
\begin{equation}\label{Perror}
\!\!\!P^n \!=\! \! {^R} D_{\Delta t}^{\alpha}u^{n}-{^R} D_{t_{n-\frac{\alpha}{2}}}^{\alpha}u+\Delta u^{n-\frac{\alpha}{2}} -
\Delta u^{n,\alpha}
+f(t_{n-\frac{\alpha}{2}},u^{n-\frac{\alpha}{2}},u^{n-m_{\tau}-\frac{\alpha}{2}})\!-\!f(t_{n-\frac{\alpha}{2}},\hat{u}^{n,\alpha},u^{n-m_{\tau},\alpha}).
\end{equation}
Now, we estimate the error of $\|P^n\|$. Actually, from the definition of $u^{n,\alpha}$ and $\hat{u}^{n,\alpha}$ and the regularity of the exact solution \eqref{excon}, we can obtain that
\begin{eqnarray}
\|u^{n-\frac{\alpha}{2}}-u^{n,\alpha}\|&=&\|(1-\frac{\alpha}{2})u^{n-\frac{\alpha}{2}}+\frac{\alpha}{2}u^{n-\frac{\alpha}{2}}-(1-\frac{\alpha}{2})u^n-\frac{\alpha}{2}u^{n-1}\|\nonumber\\
&=& \|(1-\frac{\alpha}{2})(u^{n-\frac{\alpha}{2}}-u^n)+\frac{\alpha}{2}(u^{n-\frac{\alpha}{2}}-u^{n-1})\| \nonumber \\
&=&\|-(1-\frac{\alpha}{2})\frac{\alpha}{2} \Delta t u'(\xi_{1})+(1-\frac{\alpha}{2})\frac{\alpha}{2} \Delta t u'(\xi_{2})\|\nonumber \\
&=&(1-\frac{\alpha}{2})\frac{\alpha}{2} \Delta t\|(u'(\xi_{2})- u'(\xi_{1}))\| \nonumber \\
&\leq&(1-\frac{\alpha}{2})\frac{\alpha}{2} \Delta t \int_{_{t_{n-1}}}^{t_{n}}\|u_{tt}(s)\|ds \nonumber \\
& \leq & C_1\Delta t^{2}\label{uerror},
\end{eqnarray}
and
\begin{eqnarray}
\|u^{n-\frac{\alpha}{2}}-\hat{u}^{n,\alpha}\|&=&\|u^{n-\frac{\alpha}{2}}-(2-\frac{\alpha}{2})u^{n-1}+(1-\frac{\alpha}{2})u^{n-2}\|\nonumber\\
&=&\|(2-\frac{\alpha}{2})u^{n-\frac{\alpha}{2}}-(2-\frac{\alpha}{2})u^{n-1}+(1-\frac{\alpha}{2})u^{n-2}
-(1-\frac{\alpha}{2})u^{n-\frac{\alpha}{2}}\|\nonumber\\
&=&\|(2-\frac{\alpha}{2})(u^{n-\frac{\alpha}{2}}-u^{n-1})+(1-\frac{\alpha}{2})(u^{n-2}-u^{n-\frac{\alpha}{2}})\| \nonumber\\
&=&\|(2-\frac{\alpha}{2})(1-\frac{\alpha}{2})\Delta t u'(\xi_3)-(2-\frac{\alpha}{2})(1-\frac{\alpha}{2})\Delta tu'(\xi_4)\| \nonumber\\
&=&(2-\frac{\alpha}{2})(1-\frac{\alpha}{2})\Delta t\|u'(\xi_3)-u'(\xi_4)\|\nonumber\\
&\leq&(2-\frac{\alpha}{2})(1-\frac{\alpha}{2})\Delta t\int_{t_{n-2}}^{t_{n-1}}\|u_{tt}(s)\|ds \nonumber \\
& \leq & C_2\Delta t^2,\label{uuerror}
\end{eqnarray}
where $\xi_1\in (t_{n-\frac{\alpha}{2}},t_n), \xi_2\in (t_{n-1},t_{n-\frac{\alpha}{2}})
, \xi_3\in (t_{n-\frac{\alpha}{2}},t_{n-1}),\xi_4\in (t_{n-2},t_{n-\frac{\alpha}{2}})$,
$C_1=(1-\frac{\alpha}{2})\frac{\alpha}{2}K$, $C_2=(2-\frac{\alpha}{2})(1-\frac{\alpha}{2})K$ are constants. \\
Applying \eqref{uerror} and \eqref{uuerror} and the Lipschitz condition
\begin{equation}\label{ferror}
\|f(t_{n-\frac{\alpha}{2}},u^{n-\frac{\alpha}{2}},u^{n-m_{\tau}-\frac{\alpha}{2}})\!
-\!f(t_{n-\frac{\alpha}{2}},\hat{u}^{n,\alpha},u^{n-m_{\tau},\alpha})\|\leq(L_1C_1+L_2C_2)\Delta t^2,
\end{equation}
and
\[\| \Delta (u^{n,\alpha}- u^{n-\frac{\alpha}{2}})\|\leq C_1\Delta t^2,\]
which further implies that
\begin{equation}\label{Peror1}
\|P^n\|\le C_K(\Delta t)^2,~n=1,2,3,\ldots,N,
\end{equation}
here $C_K=L_1C_1+L_2C_2$. \\
Denote
 $\theta_{h}^{n}=R_{h}u^{n}-U_{h}^{n}, n=0,1,\ldots,N.$ \\
Substituting fully scheme \eqref{add1} from equation \eqref{eq.2.5} and using the property in \eqref{eq.3.14}, we can get that
\begin{equation}\label{errorfulldiscrete}
\langle ^R D_{\Delta t}^{\alpha}\theta_h^n,v\rangle+\langle \nabla\theta_h^{n,\alpha},v\rangle=\langle R_1^n,v\rangle+\langle P^n,v\rangle-\langle ^R D_{\Delta t}^{\alpha}(u^n-R_h u^n),v\rangle,
\end{equation}
where
\begin{equation*}
R_1^n=f(t_{n-\frac{\alpha}{2}},\hat{U}_h^{n,\alpha},U_h^{n-m_{\tau},\alpha})
-f(t_{n-\frac{\alpha}{2}},\hat{u}^{n,\alpha},u^{n-m_{\tau},\alpha}).
\end{equation*}
Setting $v=\theta_{h}^{n,\alpha}$ and applying Cauchy-Schwarz inequality, it holds that
\begin{equation*}
\langle^R D_{\Delta t}^{\alpha}\theta_{h}^{n},\theta_{h}^{n,\alpha}\rangle+\|\nabla \theta_{h}^{n,\alpha}\|^{2}\leq\|R_1^n\|\,\| \theta_{h}^{n,\alpha} \|+\| P^n\|\,\| \theta_{h}^{n,\alpha}\|+\|^R D_{\Delta t}^{\alpha}(u^n-R_h u^n)\|\,\|\theta_{h}^{n,\alpha}\|.
\end{equation*}
Noticing the fact $ab\leq \frac{1}{2}(a^{2}+b^{2})$ and $\|\nabla \theta_{h}^{n,\alpha}\|^{2}\geq 0$,
\begin{equation}\label{eq.3.26}
\langle^R D_{\Delta t}^{\alpha}\theta_{h}^{n},\theta_{h}^{n,\alpha}\rangle\leq \frac{1}{2}(\|R_1^n\|^2+\|P^n\|^2+\|^R D_{\Delta t}^{\alpha}(u^n-R_h u^n)\|^2)+\frac{3}{2}\|\theta_h^{n,\alpha}\|^2.
\end{equation}
Together with \eqref{eq.1.2} and \eqref{femjl}, we can arrive that
\begin{equation}\label{eq.3.28}
 \|^R D_{\Delta t}^{\alpha} (u^n-R_h u^n)\|\le C_{\Omega}h^{r+1}\|^R D_{\Delta t}^{\alpha} u^n\|_{H^{r+1}}\leq C_{\Omega}K h^{r+1}.
\end{equation}
and
\begin{eqnarray*}
\|\hat{u}^{n,\alpha}-R_{h}\hat{u}^{n,\alpha}\|
&=&\|(2-\frac{\alpha}{2})u^{n-1}-(1-\frac{\alpha}{2})u^{n-2}
-(2-\frac{\alpha}{2})R_hu^{n-1}+(1-\frac{\alpha}{2})R_hu^{n-2}\|\nonumber \\
&\leq& (2-\frac{\alpha}{2})\|u^{n-1}-R_hu^{n-1}\|+(1-\frac{\alpha}{2})\|u^{n-2}-R_hu^{n-2}\| \nonumber \\
&\leq&(2-\frac{\alpha}{2}) C_{\Omega}h^{r+1}\|u^{n-1}\|_{H^{r+1}}+(1-\frac{\alpha}{2}) C_{\Omega}h^{r+1}\|u^{n-2}\|_{H^{r+1}} \nonumber \\
&\leq&(2-\frac{\alpha}{2}) C_{\Omega}Kh^{r+1}+(1-\frac{\alpha}{2}) C_{\Omega}Kh^{r+1} \nonumber \\
&\leq& C_{3}h^{r+1},
\end{eqnarray*}
similarly, we have
\begin{eqnarray*}
\|u^{n-m_{\tau},\alpha}-R_{h}u^{n-m_{\tau},\alpha}\|
\!\!\!\!\!&=&\!\!\!\!\! \|(1-\frac{\alpha}{2})u^{n-m_{\tau}} \! + \! \frac{\alpha}{2}u^{n-m_{\tau}-1}
\! - \! (1-\frac{\alpha}{2})R_{h}u^{n-m_{\tau}} \! - \! \frac{\alpha}{2}R_{h}u^{n-m_{\tau}-1}\| \\
&\leq&(1-\frac{\alpha}{2}) C_{\Omega}Kh^{r+1}+\frac{\alpha}{2} C_{\Omega}Kh^{r+1} \nonumber \\
&\leq& C_{4}h^{r+1},
\end{eqnarray*}
where $C_3=2(2-\frac{\alpha}{2}) C_{\Omega}K$, $C_4=2\max\{(1-\frac{\alpha}{2}),\frac{\alpha}{2}\} C_{\Omega}K$. \\
Therefore
\begin{eqnarray}\label{eq.3.27}
\|R_1^n\|&=&\|f(t_{n-\frac{\alpha}{2}},\hat{u}^{n,\alpha},u^{n-m_{\tau},\alpha})-f(t_{n-\frac{\alpha}{2}},\hat{U}_h^{n,\alpha},U_h^{n-m_{\tau},\alpha})\|\nonumber\\
&\leq&L_1\|\hat{u}^{n,\alpha}-\hat{U}_h^{n,\alpha}\|+L_2\|u^{n-m_{\tau},\alpha}-U_h^{n-m_{\tau},\alpha}\|\nonumber\\
&\leq&L_1\|\hat{\theta}_h^{n,\alpha}\|+L_2\|\theta_h^{n-m_{\tau},\alpha}\|
+L_1\|\hat{u}^{n,\alpha}-R_{h}\hat{u}^{n,\alpha}\|
+L_2\|u^{n-m_{\tau},\alpha}-R_{h}u^{n-m_{\tau},\alpha}\|\nonumber\\
&\leq&L_1\|\hat{\theta}_h^{n,\alpha}\|+L_2\|\theta_h^{n-m_{\tau},\alpha}\|+(L_1C_3+L_2C_4)h^{r+1}.
\end{eqnarray}
Substituting \eqref{Peror1}, \eqref{eq.3.28}, \eqref{eq.3.27} into \eqref{eq.3.26} and the fact $(a+b+c)^{2}\leq 3a^{2}+3b^{2}+3c^{2}$, we can get
\begin{eqnarray}
   \langle^R D_{\Delta t}^{\alpha}\theta_{h}^{n},\theta_{h}^{n,\alpha}\rangle
   &\leq&\frac{3}{2}\|\theta_{h}^{n,\alpha}\|^{2}+\frac{3L_1^2}{2}\|\hat{\theta}_h^{n,\alpha}\|^2
   +\frac{3L_2^2}{2}\|\theta_{h}^{n-m_{\tau},\alpha}\|^{2}+\frac{C_K^2}{2}(\Delta t)^4 \\ \nonumber
   &&+\frac{1}{2}[3(L_1^2C_3^2+L_2^2C_4^2)+(C_KK)^2]h^{2(r+1)}\nonumber\\
 &\leq&\frac{3}{2}\|\theta_{h}^{n,\alpha}\|^{2}+\frac{3L_1^2}{2}\|\hat{\theta}_h^{n,\alpha}\|^2
 +\frac{3L_2^2}{2}\|\theta_{h}^{n-m_{\tau},\alpha}\|^{2}+\frac{C_4}{2}(\Delta t^2+h^{r+1})^2\nonumber,
\end{eqnarray}
where $C_4=\max\{C_K^2, 3(L_1^2C_3^2+L_2^2C_4^2)+(C_KK)^2\}$. \\
Applying Lemma \ref{le.3.4} we have
\begin{equation}\label{eq.3.32}
   ^R D_{\Delta t}^{\alpha}\|\theta_{h}^{n}\|^{2}
  \leq3\|\theta_{h}^{n,\alpha}\|^{2}+3L_1^2\|\hat{\theta}_h^{n,\alpha}\|^2+3L_{2}^{2}\|\theta_{h}^{n-m_{\tau},\alpha}\|^{2}+
 C_4(\Delta t^2+h^{r+1})^2.
\end{equation}
In terms of the definition of $\|\theta_{h}^{n,\alpha}\|$ and $\hat{\theta}_h^{n,\alpha}$, we obtain
\begin{eqnarray*}
   ^R D_{\Delta t}^{\alpha}\|\theta_{h}^{n}\|^{2}
\!\!\! & \leq &\!\!\!\! 3\Big(1-\frac{\alpha}{2}\Big)^{2}\|\theta_{h}^{n}\|^{2}
   \! + \! \Big(3\Big(\frac{\alpha}{2}\Big)^{2}+3L_1^2\Big(2-\frac{\alpha}{2}\Big)^2\Big)\|\theta_{h}^{n-1}\|^{2}
   \! + \! 3L_1^2\Big(1-\frac{\alpha}{2}\Big)^{2}\|\theta_{h}^{n-2}\|^{2}\\
&& +3L_2^2\Big(1-\frac{\alpha}{2}\Big)^{2}\|\theta_{h}^{n-m_{\tau}}\|^{2}
   +3L_2^2\Big(\frac{\alpha}{2}\Big)^{2}\|\theta_{h}^{n-m_{\tau}-1}\|^{2}
   +C_4(\Delta t^{2}+h^{r+1})^{2}.
\end{eqnarray*}
Using Theorem \ref{the.3.3}, we can find a positive constant $\Delta t^{*}$ such that $\Delta t \leq \Delta t^{*}$ , then
\begin{equation*}
\|\theta_{h}^{n}\|^{2} \leq C_5(\Delta t^{2}+h^{r+1})^{2},
\end{equation*}
where $C_5$ is a nonnegative constant which only dependents on $L_1,L_2,C_4,C_K,C_{\Omega}$.
In terms of the definition of $\theta_{h}^{n}$, we have
\begin{equation*}
  \|u^{n}-U_{h}^{n}\|\leq \|u^{n}-R_{h}u^{n}\|+  \|R_{h}u^{n}-U_{h}^{n}\| \leq  C_1^*(\Delta t^{2}+h^{r+1}).
\end{equation*}
Then, we complete the proof.
\end{proof}

\section{Numerical examples}\label{num}
In this section, we give two examples to verify our theoretical results. The errors are all calculated in L2-norm.

\begin{example} Consider the nonlinear time fractional Mackey-Glass-type equation
\begin{align*}
\!\!\!\!\!  \left\{\begin{array}{l}{^{R} D_{t}^{\alpha} u(x,y, t)\!=\!\Delta u(x,y, t)\!-\!2 u(x,y, t)\!+\!\frac{u(x, y, t-0.1)}{1+u^{2}(x,y, t-0.1)}\!+\!f(x,y, t), ~(x,y)\!\in\![0, 1]^{2}, ~ t\! \in \![0,1]}, \\ {u(x,y ,t)=t^{2} \sin (\pi x)\sin (\pi y), ~ (x,y)\in[0, 1]^{2}, ~t\in[-0.1,0],}\end{array}\right.
\end{align*}
where
\begin{eqnarray*}
 f(x,y,t)&=&\frac{2 t^{2-\alpha}}{\Gamma(3-\alpha)} \sin (\pi x) \sin (\pi y)+2 t^{2} \pi^{2} \sin (\pi x) \sin (\pi y) \\
 &&-2 t^{2} \sin (\pi x) \sin (\pi y)-\frac{(t-0.1)^{2}\sin (\pi x) \sin (\pi y)}{1+[(t-0.1)^{2}\sin (\pi x) \sin (\pi y)]^{2}}.
\end{eqnarray*}
The exact solution is given as
\begin{equation*}
  u(x, t)=t^{2} \sin (\pi x)\sin (\pi y).
\end{equation*}
\end{example}
 In order to test the convergence order in temporal direction, we fixed $M=40$ for $\alpha=0.4,~ \alpha=0.6$ and different $N$. Similarly, to obtain the convergence order in spatial direction, we fixed $N=100$ for $\alpha=0.4,~ \alpha=0.6$ and different $M$. Table\ref{t1} gives the errors and convergence orders in temporal direction by using the Q-FEM. The Table\ref{t1} shows that the convergence order in temporal direction is 2.
 Similarly, Table\ref{t2} and Table\ref{t3} give the errors and convergence orders in spatial direction by using the L-FEM and Q-FEM, respectively. These numerical results correspond to our theoretical convergence order.
 \begin{table}[htbp]
\begin{center}
\caption{The errors and convergence orders in temporal direction by using Q-FEM}
\begin{tabular}{llllllllll}
\hline
 & &\multicolumn{2}{c}{$\alpha$=0.4}& &{}&\multicolumn{2}{c}{$\alpha$=0.6}&{}\\
\cline{3-4}\cline{7-8}
&{$M$}&$\mbox{~~~errors}$&$\mbox{orders}$& &{}&$\mbox{~~~errors}$&$\mbox{orders}$&{}\\
\hline
  &$5$          &1.6856e-03    &~~~*      &{} &   &5.3999e-03      &~~~*        &{}    \\
  &$10$         &2.9420e-04   &2.5184   &{} &   &1.2503e-03     &2.1106     &{}    \\
  &$20$         &5.9619e-05   &2.3030   &{} &   &3.0266e-04      &2.0465    &{}    \\
  &$40$         &1.3851e-05  &2.1058  &{} &   &7.4700e-05      &2.0185     &{}    \\
\hline
\end{tabular}\label{t1}
\end{center}
\end{table}
  \begin{table}[htbp]
\begin{center}
\caption{The errors and convergence orders in spatial direction by using L-FEM}
\begin{tabular}{llllllllll}
\hline
 & &\multicolumn{2}{c}{$\alpha$=0.4}& &{}&\multicolumn{2}{c}{$\alpha$=0.6}&{}\\
\cline{3-4}\cline{7-8}
&{$M$}&$\mbox{~~~errors}$&$\mbox{orders}$& &{}&$\mbox{~~~errors}$&$\mbox{orders}$&{}\\
\hline
  &$5$          &7.2603e-02    &~~~*      &{} &   &7.2065e-02     &~~~*        &{}    \\
  &$10$         & 1.9449e-02  &1.9003   &{} &   &1.9297e-02    &1.9009     &{}    \\
  &$20$         &8.7594e-03   &1.9673   &{} &   &8.6948e-03     &1.9662    &{}    \\
  &$40$         &4.9508e-03  &1.9834  &{} &   &4.9180e-03      &1.9807     &{}    \\
\hline
\end{tabular}\label{t2}
\end{center}
\end{table}
 \begin{table}[htbp]
\begin{center}
\caption{The errors and convergence orders in spatial direction by using Q-FEM}
\begin{tabular}{llllllllll}
\hline
 & &\multicolumn{2}{c}{$\alpha$=0.4}& &{}&\multicolumn{2}{c}{$\alpha$=0.6}&{}\\
\cline{3-4}\cline{7-8}
&{$M$}&$\mbox{~~~errors}$&$\mbox{orders}$& &{}&$\mbox{~~~errors}$&$\mbox{orders}$&{}\\
\hline
  &$5$          & 2.0750e-03   &~~~*      &{} &   &2.0746e-03     &~~~*        &{}    \\
  &$10$         & 2.4888e-04  &3.0596   &{} &   &2.5148e-04    &3.0443     &{}    \\
  &$20$         & 7.3251e-05  &3.0165   &{} &   &7.5802e-05     &2.9577    &{}    \\
  &$40$         & 3.0946e-05  &2.9952  &{} &   &3.4200e-05      & 2.7666    &{}    \\
\hline
\end{tabular}\label{t3}
\end{center}
\end{table}
\begin{example}
Consider the following nonlinear time fractional Nicholsons blowflies equation
\begin{align*}
  \left\{\begin{array}{l}{ \!\!\!\! ^{R} D_{t}^{\alpha} u(x,y,z, t)
  \! = \! \Delta u(x,y,z, t) \! - \! 2 u(x,y,z, t)+u(x,y,z, t-0.1)\exp\{-u(x,y,z, t-0.1)\} }\\
  {~~~~~~~~~~~~~~~~~~~~~~~~~~~+f(x,y,z, t), \quad ( x , y , z)\in [0,1]^{3},~ t \in[0,1]}, \\
  {u(x,y,z ,t)=t^{2} \sin (\pi x)\sin (\pi y)\sin (\pi z), \quad ( x , y , z)\in [0,1]^{3},~ t \in[-0.1 ,  0],}\end{array}\right.
\end{align*}
where
\begin{eqnarray*}
 f(x,y,z,t)
\!\!\!\! &=& \!\!\!\! \frac{2 t^{2-\alpha}}{\Gamma(3-\alpha)} \sin (\pi x) \sin (\pi y)\sin (\pi z)+2 t^{2} (\pi^{2}-1) \sin (\pi x) \sin (\pi y)\sin (\pi z) \\
&&-(t-0.1)^{2}\sin (\pi x) \sin (\pi y)\sin (\pi z)exp\{-(t-0.1)^{2}\sin (\pi x) \sin (\pi y)\sin (\pi z)\},
\end{eqnarray*}
the exact solution is given as
\begin{equation*}
  u(x, t)=t^{2} \sin (\pi x)\sin (\pi y)\sin (\pi z).
\end{equation*}
\end{example}
 In this example, in order to test the convergence order in temporal and spatial direction, we solve this problem
 by using the L-FEM with $M=N$ and the Q-FEM with $N=M^{(3/2)}$, respectively.
Table\ref{t4} and Table\ref{t5} show that the convergence orders in temporal and spatial direction are 2 and 3, respectively. The numerical results confirm our theoretical convergence order.
 \begin{table}[htbp]
\begin{center}
\caption{The errors and orders in temporal and spatial direction by using L-FEM}
\begin{tabular}{llllllllll}
\hline
 & &\multicolumn{2}{c}{$\alpha$=0.4}& &{}&\multicolumn{2}{c}{$\alpha$=0.6}&{}\\
\cline{3-4}\cline{7-8}
&{$M$}&$\mbox{~~~errors}$&$\mbox{orders}$& &{}&$\mbox{~~~errors}$&$\mbox{orders}$&{}\\
\hline
  &$5$          &8.3275e-02   &~~~*      &{} &   &8.3375e-02     &~~~*        &{}    \\
  &$10$         &2.2615e-02   &1.8806   &{} &   &2.2732e-02    & 1.8749    &{}    \\
  &$20$         &5.8356e-03   &1.9543   &{} &   &5.8662e-03    & 1.9542   &{}    \\
  &$40$         &1.4707e-03   &1.9884  &{} &   &1.4784e-03     & 1.9884    &{}    \\
\hline
\end{tabular}\label{t4}
\end{center}
\end{table}
\begin{table}[htbp]
\begin{center}
\caption{The errors and orders in temporal direction and spatial direction by using Q-FEM}
\begin{tabular}{llllllllll}
\hline
 & &\multicolumn{2}{c}{$\alpha$=0.4}& &{}&\multicolumn{2}{c}{$\alpha$=0.6}&{}\\
\cline{3-4}\cline{7-8}
&{$M$}&$\mbox{~~~errors}$&$\mbox{orders}$& &{}&$\mbox{~~~errors}$&$\mbox{orders}$&{}\\
\hline
              &$8$         &6.7379e-04   &~~~*      &{} &   &6.9141e-04      &~~~*       &{}    \\
$N=M^{(3/2)}$  &$10$         &3.1416e-04   &3.0459   &{} &   &3.4945e-04     & 3.0579   &{}    \\
              &$12$         &1.9415e-04   &3.0968   &{} &   & 1.9787e-04      & 3.1196    &{}    \\
                &$14$         &1.1891e-04   &3.1806   &{} &   &1.1992e-04      &3.2485     &{}    \\
\hline
\end{tabular}\label{t5}
\end{center}
\end{table}

\section{Conclusions}\label{con}
We proposed a linearized fractional Crank-Nicolson-Galerkin FEM for the nonlinear fractional parabolic equations with time delay.
 A novel fractional Gr\"{o}nwall type inequality is developed. With the help of the inequality, we prove convergence of the numerical scheme. Numerical examples confirm our theoretical results.

\end{document}